\title[Non-vanishing of Dirichlet series without Euler products]
      {Non-vanishing of Dirichlet series\break without Euler products}
\author[W.\ D.\ Banks]{William D.\ Banks}
\address{Department of Mathematics, 
         University of Missouri, 
         Columbia MO, USA.}
\email{bankswd@missouri.edu}
\date{\today}
\begin{document}

\begin{abstract}
We give a new proof that the Riemann zeta function is nonzero
in the half-plane $\{s\in\CC:\sigma>1\}$.  A novel feature of 
this proof is that it makes no use of the Euler product
for $\zeta(s)$.
\end{abstract}

\maketitle


\section{Introduction}
\label{sec:intro}

Let $s=\sigma+it$ be a complex variable. In the half-plane
$$
\sH\defeq\{s\in\CC:\sigma>1\}
$$
the Riemann zeta function can be defined either
as a Dirichlet series
$$
\zeta(s)\defeq\sum_{n\in\NN} n^{-s}
$$
or (equivalently) as an Euler product
$$
\zeta(s)\defeq\prod_{p\text{~prime}}(1-p^{-s})^{-1}.
$$
Since a convergent infinite product of nonzero factors is not zero,
the zeta function does not vanish in $\sH$.
This can also be seen by applying the logarithm to the Euler product:
$$
\log\zeta(s)=\sum_p\sum_{m\in\NN}(mp^s)^{-1}.
$$
Indeed, since the double sum on the right converges absolutely
in $\sH$, it follows that $\zeta(s)\ne 0$ for all $s\in\sH$.
Alternatively, since the M\"obius function $\mu$ is bounded, 
it follows that the series
$$
\zeta(s)^{-1}=\sum_{n=1}^\infty\mu(n)n^{-s}
$$
converges absolutely when $\sigma>1$, so $\zeta(s)$ cannot vanish.
Of course, to prove that the M\"obius function is bounded, one exploits
the multiplicativity of $\mu$, so this argument
also relies (albeit implicitly) on the Euler product for $\zeta(s)$.

It is crucial to our understanding
of the primes to extend the zero-free region for $\zeta(s)$ as far
to the left of $\sigma=1$ as possible.\footnote{At present, the strongest
result in this direction is due to Mossinghoff and Trudjian~\cite{MossTrud};
see also the earlier papers \cite{Ford,JangKwon,Kadiri} and references therein.}
According to Titchmarsh~\cite[\S3.1]{Titch}
this means extending the ``sphere of influence'' of the Euler product:

\medskip

\begin{quote}
{\smaller\sl The problem of the zero-free region appears to be a 
question of extending the sphere of influence of the Euler product beyond
its actual region of convergence; for examples are known of functions which
are extremely like the zeta-function in their representation by Dirichlet
series, functional equation, and so on, but which have no Euler product,
and for which the analogue of the Riemann hypothesis is false.  In fact
the deepest theorems on the distribution of the zeros of $\zeta(s)$ are
obtained in the way suggested.  But the problem of extending the sphere
of influence of [the Euler product] to the left of $\sigma=1$ in any
effective way appears to be of extreme difficulty.}
\end{quote}

\medskip

But let's play the devil's advocate for a moment!
Is it really the case that the non-vanishing of the Riemann zeta function
in $\sH$ (and in wider regions) \emph{fundamentally relies on}
the existence of an Euler product? Our aim in this paper is to provide
some evidence to the contrary.

\section{Statement of results}

For a given arithmetical function $F$ with $F(1)\ne 0$,
let $\widetilde F$ denote the \emph{Dirichlet inverse} of $F$;
this can be defined via the M\"obius relation
$$
\sum_{ab=n}F(a)\widetilde F(b)
=I(n)\defeq\begin{cases}
1&\quad\hbox{if $n=1$};\\
0&\quad\hbox{otherwise}.
\end{cases}
$$
To prove that a Dirichlet series
$D(s)\defeq\sum F(n)n^{-s}$ is nonzero in $\sH$, it
is enough to show that
$D(s)^{-1}=\sum \widetilde F(n)n^{-s}$
converges in $\sH$.  Using partial summation,
this is a consequence of any bound of the form
\begin{equation}
\label{eq:test-tildeF}
\sum_{n\le x}\widetilde F(n)\ll x^{1+o(1)}\qquad(x\to\infty).
\end{equation}
Our proof of the next theorem establishes \eqref{eq:test-tildeF}
whenever $\widetilde F$ is supported
on a set of $\kappa$-free numbers.\footnote{For
a given integer $\kappa\ge 2$, a natural number $n$ is said to be
\emph{$\kappa$-free} if $p^k\nmid n$ for every prime $p$.}

\begin{theorem}
\label{thm:main}
Let $D(s)\defeq\sum_{n\in\NN}F(n)n^{-s}$ be a Dirichlet series
such that $F$ is bounded on $\NN$, $F(1)\ne 0$, and the Dirichlet inverse
$\widetilde F$ is supported on the set of $\kappa$-free
numbers for some $\kappa\ge 2$. Then $D(s)\ne 0$ in $\sH$.
\end{theorem}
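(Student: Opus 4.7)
The plan is to verify the sufficient condition \eqref{eq:test-tildeF}: by partial summation, the bound $\sum_{n\le x}\widetilde F(n)\ll x^{1+o(1)}$ forces $\widetilde D(s)\defeq\sum\widetilde F(n)n^{-s}$ to converge throughout $\sH$, whence $D(s)\ne 0$ there. My aim is to prove a pointwise estimate $|\widetilde F(n)|\le C^{\omega(n)}$ for some $C=C(M,\kappa)$, where $M\defeq\sup_n|F(n)|$; Landau's standard estimate $\sum_{n\le x}C^{\omega(n)}\ll x(\log x)^{C-1}$ then yields \eqref{eq:test-tildeF} at once.

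Normalize $F(1)=1$. I would begin with a local analysis. Evaluating $(F*\widetilde F)(p^k)=0$ for $k\ge\kappa$, together with the support hypothesis $\widetilde F(p^k)=0$, gives a linear recurrence of order $\kappa-1$ expressing $F(p^k)$ in terms of $F(p),\dots,F(p^{\kappa-1})$. Boundedness of the sequence $(F(p^k))_k$ forces the roots of the associated characteristic polynomial to lie in the closed unit disk, yielding the uniform bound $|\widetilde F(p^j)|\le C_0(M,\kappa)$ for all primes $p$ and all $0\le j<\kappa$.

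Next, compare $F$ with the multiplicative function $F_0$ agreeing with $F$ on prime powers. Its Dirichlet inverse $\widetilde{F_0}$ is multiplicative, supported on $\kappa$-free numbers, and satisfies $|\widetilde{F_0}(n)|\le C_0^{\omega(n)}$. The series $D_0(s)=\prod_p\sum_{k\ge 0}F(p^k)p^{-ks}$ has an Euler product whose local factors are reciprocals of polynomials of degree $<\kappa$ in $p^{-s}$ with coefficients $\le C_0$; these polynomials are uniformly bounded away from zero for $\sigma>1$, so $D_0(s)\ne 0$ on $\sH$. Writing $D=D_0\cdot\phi$ with $\phi\defeq F*\widetilde{F_0}$, it suffices to bound $\widetilde\phi$. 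Since $F\equiv F_0$ on prime powers, $\phi(p^k)=(F_0*\widetilde{F_0})(p^k)=I(p^k)=0$ for $k\ge 1$, so $\phi$ is supported on $\{1\}\cup\{n:\omega(n)\ge 2\}$ with $|\phi(n)|\ll M\,d(n)\,C_0^{\omega(n)}$, and the same vanishing property transfers to $\widetilde\phi$; the Neumann expansion $\widetilde\phi=\sum_k(-1)^k(\phi-I)^{*k}$ then truncates at $k=O(\log n)$, since each non-identity factor must contribute at least two distinct primes.

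The main obstacle is proving the polynomial bound $|\widetilde\phi(n)|\le C_1^{\omega(n)}$. A naive triangle-inequality bound on the Neumann series produces an ordered-factorization count that grows super-exponentially in $\omega(n)$, so one must either extract cancellation from the alternating signed sum or substitute an averaged Shiu-type estimate on $\kappa$-free integers with a prescribed number of distinct prime factors. Once this bound is secured, $|\widetilde F(n)|=|(\widetilde{F_0}*\widetilde\phi)(n)|\le C^{\omega(n)}$ follows by a routine Dirichlet convolution estimate, completing the proof.
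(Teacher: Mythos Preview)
Your proposal leaves the central difficulty unresolved, as you yourself concede: the pointwise bound $|\widetilde\phi(n)|\le C_1^{\omega(n)}$ (equivalently $|\widetilde F(n)|\le C^{\omega(n)}$) is exactly where all the work lies, and the decomposition $D=D_0\cdot\Phi$ does not reduce it. The multiplicative comparison $F_0$ and the Euler-product argument for $D_0\ne 0$ are correct (boundedness of $(F(p^k))_k$ forces the power series $\sum_k F(p^k)X^k=1/P_p(X)$ to have radius of convergence $\ge 1$, so the roots of $P_p$ lie outside the open unit disc). But $\widetilde\phi=F_0*\widetilde F$ is generally \emph{not} supported on $\kappa$-free integers, and its Neumann expansion suffers the same ordered-factorization explosion as that of $\widetilde F$: the fact that each factor in $(\phi-I)^{*k}$ has at least two prime divisors only yields $k\ll\log n$, which the crude expansion already gives. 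You propose no mechanism for the required cancellation, and it is not clear that a pointwise bound of the shape $C^{\omega(n)}$ is even true under the bare hypotheses of the theorem (no multiplicativity of $F$ is assumed).

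The paper takes your ``averaged'' fallback seriously from the start, but not via a Shiu-type estimate. It makes no attempt at a bound like $C^{\omega(n)}$; it simply accepts the trivial Neumann-series majorant $|\widetilde F(n)|\le C^{\Omega(n)}f(n)$, where $f(n)=\sum_k f_k(n)$ is the total ordered-factorization count, and then shows that this (highly non-multiplicative) majorant has controlled average over $\kappa$-free integers:
\[
\sum_{n\le x} C^{\Omega(n)}f(n)\,\ind{\NN_\kappa}(n)\le x^{1+o(1)}.
\]
Two ingredients drive this. First, a combinatorial estimate $f(n)\le\exp\big(\ell\log\ell+O(\ell\log_2\ell\,\log_3\ell)\big)$ with $\ell=\Omega(n)$, obtained by grouping ordered factorizations according to the partition of $\ell$ they induce. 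Second, a Hardy--Ramanujan-type upper bound for $N_{\kappa,\ell}(x)=|\{n\le x:\,n\in\NN_\kappa,\ \Omega(n)=\ell\}|$ that is essentially as sharp in $\ell$ as the classical bound for $\omega(n)=\ell$. The $\kappa$-free hypothesis enters only here, to cap $\Omega(n)\le(\kappa-1)\omega(n)\ll(\log x)/\log_2 x$ and make the $\exp(\ell\log\ell)$ growth of $f$ cancel against the $1/(\ell-1)!$ in the level-set count. Your route would need to supply a replacement for both of these pieces.
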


This theorem is proved in \S\ref{sec:reciprocal};
it establishes the property of non-vanishing in~$\sH$
for a large class of Dirichlet series, almost all of which do not have
an Euler product (but some do).

For a Dirichlet series $D(s)$ attached to a bounded
completely multiplicative function $F$ (for example, the Riemann zeta function),
Theorem~\ref{thm:main} provides a novel route to showing that $D(s)$
is nonzero in $\sH$.  For such $F$, one can easily show
that $\widetilde F$ is supported on the set of
\emph{squarefree} numbers provided that one has the luxury of using the
Euler product for $D(s)$.
For this reason, it is important to note that our proof of the next theorem
\emph{makes no use of the Euler product for $D(s)$}.
Instead, a combinatorial identity is employed to
show that $\widetilde F$ has the required support.

\begin{theorem}
\label{thm:main2}
Let $F$ be an arithmetical function that is bounded and completely
multiplicative. Then the Dirichlet inverse $\widetilde F$ is supported
on the set of squarefree numbers.
\end{theorem}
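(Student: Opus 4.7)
My plan is to derive a closed-form expression for $\widetilde F(n)$, reduce the claim to a purely combinatorial identity about ordered factorizations of $n$, and then establish that identity by a sign-reversing involution. First I would note that complete multiplicativity, together with the assumption $F(1)\ne 0$ implicit in the existence of $\widetilde F$, forces $F(1)=1$ via $F(1)=F(1)^2$. Unwinding the defining recursion
$$\widetilde F(n)=-\sum_{\substack{d\mid n\\ d>1}}F(d)\,\widetilde F(n/d)\qquad(n>1)$$
by induction on $n$ yields the closed form
$$\widetilde F(n)=\sum_{k\ge 0}(-1)^{k}\sum_{\substack{d_1\cdots d_k=n\\ d_i\ge 2}}F(d_1)\cdots F(d_k),$$
a finite sum since the constraint $d_i\ge 2$ bounds the number of factors by $\log_2 n$. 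Complete multiplicativity then collapses every inner product to $F(n)$, giving
$$\widetilde F(n)=F(n)\,a(n),\qquad\text{where}\quad a(n)\defeq\sum_{k\ge 0}(-1)^{k}C_k(n)$$
and $C_k(n)$ denotes the number of ordered factorizations of $n$ into $k$ factors, each $\ge 2$.

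The task is thereby reduced to the combinatorial identity $a(n)=0$ whenever $n$ is not squarefree. My approach is to construct a sign-reversing involution $\Phi$ on the set of such ordered factorizations. Let $p$ be the smallest prime with $p^2\mid n$, and given a factorization $(d_1,\ldots,d_k)$ let $j$ be the least index with $p\mid d_j$. If $d_j=p$ I set
$$\Phi(d_1,\ldots,d_k)\defeq(d_1,\ldots,d_{j-1},\,p\,d_{j+1},\,d_{j+2},\ldots,d_k),$$
merging two adjacent factors; if instead $d_j>p$ I set
$$\Phi(d_1,\ldots,d_k)\defeq(d_1,\ldots,d_{j-1},\,p,\,d_j/p,\,d_{j+1},\ldots,d_k),$$
splitting $d_j$. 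Each operation changes the parity of $k$ by exactly one, and the two branches are mutually inverse: merging raises $v_p$ at position $j$ above $1$, while splitting lowers it to exactly $1$ with $d_j=p$.

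I expect the only delicate point to be checking that $\Phi$ is well-defined. In the split branch one needs $d_j/p\ge 2$, which holds since $p\mid d_j$ and $d_j>p$ together imply $d_j\ge 2p$. In the merge branch one needs the index $j+1$ to exist: if $d_j=p$ and $j=k$, then $p\nmid d_i$ for every $i<k$ while $v_p(d_k)=1$, forcing $v_p(n)=1$ and contradicting $p^2\mid n$. Once these verifications are in place, $\Phi$ pairs even- and odd-length factorizations bijectively, so $a(n)=0$ and hence $\widetilde F(n)=0$ for every non-squarefree $n$, completing the proof.
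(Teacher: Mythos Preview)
Your argument is correct and reaches the goal by a genuinely different route than the paper. Both the paper and you first reduce to the identity $\widetilde F(n)=F(n)\,a(n)$, where $a(n)=\sum_{k}(-1)^{k}f_{k}(n)$ is the alternating count of ordered factorizations (in the paper's notation this is $\widetilde F_{-1}(n)$, which is of course the M\"obius function). From there the paper invokes Lemma~\ref{lem:chirpingbird}, a binomial-sum identity for $\widetilde F_{z}(p^{\alpha}n)$ valid for all $z$, and specializes to $z=-1$ so that the prefactor $(z+1)^{\alpha-1}$ kills everything once $\alpha\ge 2$. You instead prove $a(n)=0$ for non-squarefree $n$ directly by a sign-reversing involution on ordered factorizations, with no generating-function machinery at all. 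Your approach is shorter and more elementary for the theorem at hand; the paper's detour through Lemma~\ref{lem:chirpingbird} is longer but yields a formula for general $z$ that is used elsewhere in \S\ref{sec:familyD}.

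One small wording issue: your sentence ``merging raises $v_{p}$ at position $j$ above $1$'' is not literally true, since $d_{j+1}$ need not be divisible by $p$, so $v_{p}(p\,d_{j+1})$ may still equal $1$. What you actually need, and what you correctly verify in the following paragraph, is only that the merged entry $p\,d_{j+1}$ exceeds $p$ (because $d_{j+1}\ge 2$), so that the split branch applies to it. With that correction the involution is well defined, fixed-point free, and parity-reversing, and the proof goes through.
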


This theorem is proved in \S\ref{sec:familyD}.

In particular, Theorems~\ref{thm:main} and~\ref{thm:main2}
together yield the following result
without any use of the Euler product for $\zeta(s)$.

\begin{corollary}
The Riemann zeta function does not vanish in $\sH$.
\end{corollary}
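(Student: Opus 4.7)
The plan is to specialize Theorems~\ref{thm:main} and~\ref{thm:main2} to the constant function $F\equiv 1$ and verify that the hypotheses line up. First I would observe that $F(n)=1$ for all $n\in\NN$ is manifestly bounded, satisfies $F(1)=1\ne 0$, and is trivially completely multiplicative; moreover the associated Dirichlet series $D(s)=\sum_{n\in\NN}F(n)n^{-s}$ coincides with $\zeta(s)$ throughout $\sH$.

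Next I would invoke Theorem~\ref{thm:main2} to conclude that the Dirichlet inverse $\widetilde F$ is supported on the set of squarefree numbers. Since every squarefree number is $2$-free, this places $\widetilde F$ squarely in the setting of Theorem~\ref{thm:main} with $\kappa=2$. Applying Theorem~\ref{thm:main} then yields $\zeta(s)=D(s)\ne 0$ for all $s\in\sH$.

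The hard part, such as it is, lies not in the deduction itself but in making sure that no Euler-product reasoning has been smuggled in through the back door. Verifying that the constant function $F\equiv 1$ is completely multiplicative is a purely formal tautology and invokes no multiplicative structure of $\zeta(s)$ beyond its definition as a Dirichlet series; the genuine content — that $\widetilde F$ is supported on squarefree integers without any recourse to an Euler product, and that such a support condition alone forces the Dirichlet series to be nonzero in $\sH$ — has been isolated in Theorems~\ref{thm:main2} and~\ref{thm:main} respectively. I would therefore state the corollary as a one-line consequence of these two theorems applied to $F\equiv 1$, with a brief remark underscoring that the Euler product for $\zeta(s)$ has nowhere been used.
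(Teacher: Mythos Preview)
Your proposal is correct and matches the paper's own treatment: the corollary is stated immediately after Theorems~\ref{thm:main} and~\ref{thm:main2} as a direct consequence of combining them, with $F\equiv 1$ supplying the bounded, completely multiplicative coefficient function whose Dirichlet series is $\zeta(s)$. Your observation that squarefree coincides with $2$-free so that $\kappa=2$ works in Theorem~\ref{thm:main}, and your care to note that no Euler-product reasoning enters, are exactly in the spirit of the paper.
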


To further illustrate how our results can
be applied, in \S\ref{sec:familyD} we introduce and study a special family of
Dirichlet series $\sD\defeq\{D_z(s):z\in\CC\}$ with the
following properties:
\begin{itemize}
\item[$(i)$] The Riemann zeta function belongs to $\sD$;
\item[$(ii)$] Every series $D_z(s)$ is meromorphic
and \emph{nonzero} in the region $\sH$;
\item[$(iii)$] Only two series in $\sD$ have an Euler product, namely
the Riemann zeta function and the constant function $\ind{\CC}(s)=1$
for all $s\in\CC$.
\end{itemize}
Viewing $\zeta(s)$ in relation to the other members of $\sD$,
the existence of an Euler product seems quite unusual, whereas non-vanishing
in the half-plane $\sH$ is a property enjoyed by every
member of $\sD$.

\section{Preliminaries}

Throughout the paper, we fix an integer parameter $\kappa\ge 2$ and
denote by $\NN_\kappa$ the set of $\kappa$-free numbers.
We denote by $\ind{\NN_\kappa}$ the indicator function of $\NN_\kappa$:
$$
\ind{\NN_\kappa}(n)\defeq\begin{cases}
1&\quad\hbox{if $n\in\NN_\kappa$;}\\
0&\quad\hbox{otherwise}.
\end{cases}
$$

We denote by $\omega(n)$ the number of distinct prime factors of $n$
and by $\Omega(n)$ the number of prime factors of $n$, counted with
multiplicity.

For any integer $k\ge 2$ we denote by $\log_kx$ the $k$-th iterate
of the function $x\mapsto \max\{\log x,1\}$.  In particular,
$\log_2x=\log\log x$ and $\log_3x=\log\log\log x$
when $x$ is sufficiently large.

We use the equivalent notations $f(x)=O(g(x))$ and
$f(x)\ll g(x)$ to mean that
the inequality $|f(x)|\le c\,g(x)$ holds with some constant $c$.
Throughout the paper, any implied constants in the symbols $O$ and $\ll$
may depend (where obvious) on the parameters $\kappa,\eps$ but
are absolute otherwise.

Two classical results of Hardy and Ramanujan~\cite[Lemmas B and C]{HardyRaman1}
assert the existence of constants $c_1,c_2>0$ such that the
inequalities
\begin{equation}
\label{eq:HarRam1}
\big|\{n\le x:\omega(n)=\ell\}\big|\le\frac{c_1x}{\log x}
\frac{(\log_2x+c_2)^{\ell-1}}{(\ell-1)!}
\end{equation}
and
\begin{equation}
\label{eq:HarRam2}
\big|\{n\le x:\Omega(n)=\ell\}\big|\le\frac{c_1x}{\log x}
\sum_{j=0}^{\ell-1}\big(\tfrac{9}{10}\big)^{\ell-1-j}
\frac{(\log_2x+c_2)^j}{j!}
\end{equation}
hold for all real $x\ge 2$.  In the next lemma,
we study the counting function
\begin{equation}
\label{eq:Nklx-defn}
N_{\kappa,\ell}(x)\defeq
\big|\{n\le x:n\in\NN_\kappa\text{~and~}\Omega(n)=\ell\}\big|.
\end{equation}
Although this function might seem closely related to
that on the left side of \eqref{eq:HarRam2}, we prove that it
satisfies a bound nearly as strong as \eqref{eq:HarRam1}.

\begin{lemma}
\label{lem:Ryan}
There are absolute constants $C_1,C_2>0$ with the following property.
For any integers $\kappa\ge 2$ and $\ell\ge 1$, the counting function
defined by \eqref{eq:Nklx-defn} satisfies the upper bound
\begin{equation}
\label{eq:HarRam-free}
N_{\kappa,\ell}(x)\le
\frac{C_1x}{\log x}\frac{((\kappa-1)\log_2x+(\kappa-1)C_2)^{\ell-1}}{(\ell-1)!}
\qquad(x\ge 2).
\end{equation}
\end{lemma}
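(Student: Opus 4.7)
The plan is to prove the bound by induction on $\ell$, with the crucial input being the Hardy--Ramanujan estimate~\eqref{eq:HarRam1}. The base case $\ell=1$ reduces to $N_{\kappa,1}(x)=\pi(x)$, which is handled by the elementary Chebyshev bound $\pi(x)\ll x/\log x$.

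For the inductive step with $\ell\ge 2$, I would first derive the recursive inequality
\[
\ell\,N_{\kappa,\ell}(x)\le\sum_{a=1}^{\kappa-1}a\sum_p N_{\kappa,\ell-a}(x/p^a)
\]
by decomposing $\Omega(n)=\sum_{p^a\parallel n}a$ over the relevant $n$, then writing each such $n$ as $p^a m$ with $m\in\NN_\kappa$, $p\nmid m$, and $\Omega(m)=\ell-a$ (and dropping the coprimality constraint in the final step). The main contribution comes from the $a=1$ term: applying the inductive hypothesis together with the Mertens-type bound $\sum_p(x/p)/\log(x/p)\ll x\log_2 x/\log x$ (itself a consequence of~\eqref{eq:HarRam1} applied with $\ell=2$), one obtains
\[
\sum_p N_{\kappa,\ell-1}(x/p)\ll\frac{C_1\,x\log_2 x}{\log x}\cdot\frac{A^{\ell-2}}{(\ell-2)!},
\]
where $A\defeq(\kappa-1)(\log_2 x+C_2)$. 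The inequality $\log_2 x\le A/(\kappa-1)$ converts this into a bound involving $A^{\ell-1}$, while the terms with $a\ge 2$ contribute only at lower order because $\sum_p p^{-a}$ converges absolutely for $a\ge 2$.

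The main obstacle is arranging the absolute constants $C_1,C_2$ so that they do not depend on $\kappa$; this is tight at $\kappa=2$. In that borderline case the induction can be bypassed altogether: $\NN_2$ is exactly the set of squarefree integers, so $\Omega=\omega$ on $\NN_2$, and therefore $N_{2,\ell}(x)\le\lvert\{n\le x:\omega(n)=\ell\}\rvert$ is bounded directly by~\eqref{eq:HarRam1}. For $\kappa\ge 3$ the factor $\kappa-1\ge 2$ provides the slack needed to absorb the absolute Mertens constant appearing in the induction, and by dividing by $\ell$ and reorganising the falling factorial $\ell(\ell-2)!=\tfrac{\ell}{\ell-1}(\ell-1)!$ one recovers the target bound with $C_1,C_2$ independent of $\kappa$.
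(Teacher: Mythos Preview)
Your recursive inequality $\ell\,N_{\kappa,\ell}(x)\le\sum_{a=1}^{\kappa-1}a\sum_p N_{\kappa,\ell-a}(x/p^a)$ is correct, and your treatment of the $a=1$ contribution together with the separate handling of $\kappa=2$ is sound in outline. The genuine gap is the assertion that the terms with $a\ge2$ are of lower order. Applying the inductive hypothesis to, say, the $a=2$ term and using the (valid) bound $\sum_{p}\frac{1}{p^2\log(x/p^2)}\ll\frac{1}{\log x}$, one obtains a contribution to $N_{\kappa,\ell}(x)$ of order $\frac{C_1x}{\ell\log x}\cdot\frac{A^{\ell-3}}{(\ell-3)!}$, where $A=(\kappa-1)(\log_2x+C_2)$. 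The ratio of this to the target $\frac{C_1x}{\log x}\cdot\frac{A^{\ell-1}}{(\ell-1)!}$ is $\asymp\frac{(\ell-1)(\ell-2)}{\ell A^2}\asymp\frac{\ell}{A^2}$, which tends to infinity once $\ell\gg(\log_2x)^2$, whereas $\ell$ ranges up to roughly $\log x/\log_2x$. So the induction does not close: the convergence of $\sum_p p^{-a}$ controls the prime sum, but stepping from level $\ell$ to level $\ell-a$ replaces $A^{\ell-1}/(\ell-1)!$ by $A^{\ell-a-1}/(\ell-a-1)!$, and for large $\ell$ the latter is \emph{larger} by a factor $\asymp(\ell/A)^{a-1}$, not smaller.

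The paper's proof sidesteps this by engineering a recursion that descends only one step in $\ell$. Each $\kappa$-free $n$ with $\Omega(n)=\ell$ is encoded as $\tilde p(j_1)\cdots\tilde p(j_\ell)$ with $j_1<\cdots<j_\ell$, where $(\tilde p(j))_{j\ge1}$ lists every prime exactly $\kappa-1$ times in increasing order; stripping off any one of the first $\ell-1$ factors yields a pair $(j,m)$ with $m\in\NN_\kappa$, $\Omega(m)=\ell-1$, and (crucially) $\tilde p(j)\le m$, so that $\tilde p(j)^2\le x$. This gives $(\ell-1)N_{\kappa,\ell}(x)\le(\kappa-1)\sum_{p^2\le x}N_{\kappa,\ell-1}(x/p)$, and now the single factor $\kappa-1$ combines exactly with the Hardy--Ramanujan estimate $\sum_{p^2\le x}\frac{1}{p\log(x/p)}<\frac{\log_2x+C_2}{\log x}$ to reproduce one power of $A$, so the induction closes identically for every $\ell$.
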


\begin{proof}
Our proof is an adaptation of arguments from \cite{HardyRaman1}.

When $\ell\le\kappa$, the condition $\Omega(n)=\ell$
implies that $n\in\NN_\kappa$.  Using \eqref{eq:HarRam2} it follows that
$$
N_{\kappa,\ell}(x)
=\big|\{n\le x:\Omega(n)=\ell\}\big|
\le\frac{ec_1x(\log_2x+c_2)^{\ell-1}}{\log x}\qquad(x\ge 2),
$$
hence \eqref{eq:HarRam-free} holds
for any choice of $C_1\ge ec_1$ and $C_2\ge c_2$.

From now on, we assume that $\ell>\kappa$. To simplify the notation slightly,
we put $\kappa_1\defeq\kappa-1$.

Let $p(1)\defeq 2<p(2)\defeq 3<p(3)\defeq 5<\cdots$ be the sequence of all 
primes, and put
$$
\tilde p(j)\defeq p(\rf{j/\kappa_1})\qquad(j\in\NN),
$$
where for any $t>0$, $\rf{t}$ is the least integer that is $\ge t$.
In other words, $\big(\tilde p(j)\big)_{j\in\NN}$ is the sequence
$$
\underbrace{\,2,\ldots,2\,}_{\kappa_1\text{~copies}}\,,
\underbrace{\,3,\ldots,3\,}_{\kappa_1\text{~copies}}\,,
\underbrace{\,5,\ldots,5\,}_{\kappa_1\text{~copies}}\,,\ldots
$$
in which the primes appear in increasing order, each being
repeated $\kappa_1$ times.

Let $n\in\NN_\kappa$, $n\ge 2$, and suppose that $\Omega(n)=\ell$.
Among all of the ordered $\ell$-tuples $(j_1,\ldots,j_\ell)$ having
$j_1<\cdots<j_\ell$ and for which
\begin{equation}
\label{eq:factorization}
n=\tilde p(j_1)\cdots\tilde p(j_\ell),
\end{equation}
let $\Psi(n)$ be the unique $\ell$-tuple $(j_1,\ldots,j_\ell)$ 
that minimizes the sum $j_1+\cdots+j_\ell$.  For any such $n$
we also put
$$
J(n)\defeq j_\ell,
$$
and we set $J(1)\defeq 0$.  For example, if $\kappa=5$,
then $4400\in\NN_\kappa$ and we have
$$
\Psi(4400)=(1, 2, 3, 4, 9, 10, 17)\mand J(4400)=17.
$$

Let $\cS$ be the set of ordered pairs $(j,m)$ such that $j\le J(m)$,
$m\in\NN_\kappa$, and $\tilde p(j)m\le x$.  The condition
$j\le J(m)$ implies that the prime $\tilde p(j)$ does not exceed
the largest prime factor of $m$, and thus $\tilde p(j)\le m\le x/\tilde p(j)$; consequently,
\begin{equation}
\label{eq:dough1}
|\cS|\le\sum_{j\,:\,\tilde p(j)^2\le x}N_{\kappa,\ell-1}(x/\tilde p(j)).
\end{equation}
On the other hand, suppose that $n\le x$, $n\in\NN_\kappa$
and $\Omega(n)=\ell$.  Factoring $n$ as in \eqref{eq:factorization}
with $(j_1,\ldots,j_\ell)=\Psi(n)$, one verifies that the pair
$(j_i,n/\tilde p(j_i))$ lies in $\cS$ for each $i=1,\ldots,\ell-1$
Hence, $n$ can be expressed in $\ell-1$ different ways
as the product of the entries of an ordered pair in $\cS$,
which implies that
\begin{equation}
\label{eq:dough2}
(\ell-1)N_{\kappa,\ell}(x)\le |\cS|.
\end{equation}
Combining \eqref{eq:dough1} and \eqref{eq:dough2}, and using induction,
we have
\begin{align*}
N_{\kappa,\ell}(x)
&\le\frac{1}{\ell-1}\sum_{j\,:\,\tilde p(j)^2\le x}N_{\kappa,\ell-1}(x/\tilde p(j))\\
&\le\frac{1}{\ell-1}\sum_{j\,:\,\tilde p(j)^2\le x}
\frac{C_1(x/\tilde p(j))}{\log(x/\tilde p(j))}
\frac{(\kappa_1\log_2(x/\tilde p(j))+\kappa_1C_2)^{\ell-2}}{(\ell-2)!}\\
&\le\frac{C_1x(\kappa_1\log_2x+\kappa_1C_2)^{\ell-2}}{(\ell-1)!}
\sum_{j\,:\,\tilde p(j)^2\le x}\frac{1}{\tilde p(j)\log(x/\tilde p(j))}.
\end{align*}
As each prime is repeated precisely $\kappa_1$ times in
$\big(\tilde p(j)\big)_{j\in\NN}$ we have
$$
\sum_{j\,:\,\tilde p(j)^2\le x}\frac{1}{\tilde p(j)\log(x/\tilde p(j))}
=\kappa_1\sum_{p\,:\,p^2\le x}\frac{1}{p\log(x/p)}.
$$
The proof is completed using the fact that
$$
\sum_{p\,:\,p^2<x}\frac{1}{p\log(x/p)}<\frac{\log_2x+C_2}{\log x}\qquad(x\ge 2)
$$
holds for a sufficiently large choice of $C_2$;
see the proof of \cite[Lemma~C]{HardyRaman1}.
\end{proof}

\section{Factorisatio numerorum}
\label{sec:FactNumer}

For any $n\ge 2$, let
$f(n)$ denote the number of representations of $n$
as a product of integers exceeding one, where two representations are considered
equal only if they contain the same factors in the same order.  For technical
reasons, we also set $f(1)\defeq 1$.  

One can define $f(n)$ as follows.
For every positive integer $k$, let $f_k(n)$ denote the
number of ordered $k$-tuples $(n_1,\ldots,n_k)$ such that each $n_j\ge 2$ and
the product $n_1\cdots n_k$ equals $n$.  Then
\begin{equation}
\label{eq:dinftysum}
f(n)\defeq I(n)+\sum_{k\ge 1}f_k(n)\qquad(n\in\NN),
\end{equation}
where
$$
I(n)\defeq\begin{cases}
1&\quad\hbox{if $n=1$};\\
0&\quad\hbox{otherwise}.
\end{cases}
$$
Note that the sum in \eqref{eq:dinftysum}
is finite since $f_k(n)=0$ when $k>\Omega(n)$.

In one of the earliest papers about the function $f(n)$,
Kalm\'ar~\cite{Kalmar} establishes the asymptotic formula
\begin{equation}
\label{eq:Kalmar}
\sum_{n\le x}f(n)\sim -\frac{x^\beta}{\beta\zeta'(\beta)}\qquad(x\to\infty),
\end{equation}
where $\beta=1.728647\cdots$ is the unique positive root of $\zeta(\beta)=2$.
In particular, this implies that
\begin{equation}
\label{eq:f(n)bound}
f(n)\ll n^\beta\qquad(n\in\NN).
\end{equation}
The bound \eqref{eq:f(n)bound} is essentially optimal since Hille~\cite{Hille}
has shown that for every $\eps>0$ the lower bound $f(n)\gg n^{\beta-\eps}$ holds for
infinitely many $n$; see also Erd\H os~\cite{Erdos}.

The next proposition is fundamental in the sequel as it leads to
a significant strengthening of \eqref{eq:f(n)bound} for $\kappa$-free
numbers $n$.

\begin{proposition}
\label{prop:powerade}
For any integer $n\ge 2$ we have
$$
f(n)\le\exp(\ell\log\ell+O(\ell\log_2\ell\log_3\ell))
\qquad\text{with}\quad\ell\defeq\Omega(n).
$$
\end{proposition}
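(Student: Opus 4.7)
\emph{The plan} is to estimate each $f_k(n)$ by a crude combinatorial count obtained by ignoring the requirement $n_j\ge 2$, then sum over $k$ and take logarithms. Write $n=p_1^{a_1}\cdots p_r^{a_r}$ with $a_1+\cdots+a_r=\ell$. An ordered tuple $(n_1,\dots,n_k)$ of positive integers with $n_1\cdots n_k=n$ is uniquely specified by the $r$ weak compositions $a_i=e_{i,1}+\cdots+e_{i,k}$ that record the exponent of $p_i$ in $n_j$; since the tuples counted by $f_k(n)$ form a subset of these,
\[
f_k(n)\;\le\;\prod_{i=1}^{r}\binom{a_i+k-1}{k-1}.
\]

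Next I would invoke the elementary inequality $\binom{a+k-1}{a}\le k^a$, which follows by interpreting the binomial coefficient as the number of weakly increasing words of length $a$ in the alphabet $\{1,\dots,k\}$. Multiplying across $i$ collapses the product to $f_k(n)\le k^\ell$. Because every factor $n_j\ge 2$ absorbs at least one prime occurrence of $n$, the quantity $f_k(n)$ vanishes for $k>\ell$, so
\[
f(n)\;\le\;1+\sum_{k=1}^{\ell}k^{\ell}\;\le\;1+\ell\cdot\ell^{\ell}.
\]
Taking logarithms produces $\log f(n)\le(\ell+1)\log\ell+O(1)=\ell\log\ell+O(\log\ell)$, and since $\log\ell\le\ell\le\ell\log_2\ell\log_3\ell$ (the second inequality using $\log_2\ell\log_3\ell\ge 1$) the proposition's bound drops out.

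I do not foresee any serious obstacle. The one step worth a line of explicit care is the reduction $f_k(n)\le\prod_i\binom{a_i+k-1}{k-1}$, which amounts to the observation that the right-hand side counts all ordered $k$-tuples of positive divisors of $n$ whose product is $n$ (including entries equal to $1$), of which the ordered $k$-factorizations into parts $\ge 2$ form a subset. Note that this plan actually delivers a sharper error term $O(\log\ell)$ in place of the proposition's $O(\ell\log_2\ell\log_3\ell)$, so there is considerable slack and no need to invoke any finer machinery (such as Stirling's formula applied to the Fubini number $F_\ell=\sum_k k!\,S(\ell,k)$, which is what one obtains by instead bounding $f_k(n)$ by the corresponding count for squarefree $n$ of the same $\Omega$).
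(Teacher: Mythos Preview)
Your argument is correct, and it is both shorter and sharper than the paper's. The paper proceeds by mapping each ordered factorization to the partition $\lambda\vdash\ell$ recording the multiset $\{\Omega(n_j)\}$, then bounds the fiber over each $\lambda$ by a multinomial expression $\frac{\ell!}{\prod_k(k!)^{m_k}}\cdot\frac{m!}{\prod_k m_k!}$, and finally splits the logarithm of this quantity according to whether the multiplicities $m_k$ exceed $\ell/g(\ell)$ for a carefully chosen $g(\ell)$; the Hardy--Ramanujan partition asymptotic handles the number of fibers. This yields only the error $O(\ell\log_2\ell\log_3\ell)$.

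Your route bypasses all of this: the single inequality $f_k(n)\le\prod_i\binom{a_i+k-1}{a_i}\le k^{\ell}$ (dropping the constraint $n_j\ge2$, then using that the number of size-$a$ multisets from a $k$-set is at most $k^a$) together with $f_k(n)=0$ for $k>\ell$ already gives $f(n)\le\ell\cdot\ell^{\ell}$, hence $\log f(n)\le\ell\log\ell+O(\log\ell)$. Since the paper's convention forces $\log_2\ell\,\log_3\ell\ge1$, this is well inside the stated bound. The gain is real: in the downstream Corollary the paper must check that $\exp(O(\ell\log_2\ell\log_3\ell))=x^{o(1)}$ for $\ell\le 2\kappa(\log x)/\log_2 x$, whereas with your $O(\log\ell)$ term that step becomes trivial. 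The paper's finer decomposition buys nothing here; your crude bound on $f_k(n)$ already dominates the problem.
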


\begin{proof}
Let $\cT(n)$ be the set of ordered tuples $(n_1,\ldots,n_r)$ of any
length $r$ such that every $n_j\ge 2$
and $n_1\cdots n_r=n$.  Thus, $|\cT(n)|=f(n)$.

Let $\cP(\ell)$ be the set
of ordered partitions $\lambda$ of $\ell$; these are ordered tuples
$\lambda=(\lambda_1,\ldots,\lambda_r)$ of any length $r$
such that $1\le\lambda_1\le\cdots\le\lambda_r$ and
$\lambda_1+\cdots+\lambda_r=\ell$.

We begin by constructing a map $\Phi:\cT(n)\to\cP(\ell)$ as follows.  For any given
$\eta=(n_1,\ldots,n_r)$ in $\cT(n)$, let $\Phi_\Omega(\eta)$ denote the
tuple $(\Omega(n_1),\ldots,\Omega(n_r))$, and set
$$
\cU(n)\defeq\{\Phi_\Omega(\eta):\eta\in\cT(n)\};
$$
thus, $\Phi_\Omega:\cT(n)\to\cU(n)$.  Next, for any $w=(w_1,\ldots,w_r)$
in $\cU(n)$, let $\Phi_\sigma(w)$ be the tuple
$(\lambda_1,\ldots,\lambda_r)$ that is obtained by rearranging the entries of $w$
into nondecreasing order; then $\Phi_\sigma:\cU(n)\to\cP(\ell)$.
The map $\Phi:\cT(n)\to\cP(\ell)$ is defined to be the composition
$\Phi_\sigma\circ\Phi_\Omega$.

Next, for any $\lambda\in\cP(\ell)$ let $d_\lambda(n)$ 
be the cardinality of the set $\Phi^{-1}(\{\lambda\})$
of preimages of $\lambda$ in $\cT(n)$.\footnote{A more descriptive but less precise
definition is the following. If $\lambda=(\lambda_1,\ldots,\lambda_r)\in\cP(\ell)$,
then $d_\lambda(n)$ is the number of $r$-tuples $(n_1,\ldots,n_r)$
for which the product $n_1\cdots n_r$ equals $n$ and such that,
after a suitable permutation of the indices, one has
$\Omega(n_j)=\lambda_j$ for each $j$
(that is, the multisets $\{\Omega(n_j)\}$ and $\{\lambda_j\}$ are the same).}
Since any product counted by $f(n)$ gives rise to a unique
partition $\lambda$ via the map $\Phi$, we have
$$
f(n)\defeq I(n)+\sum_{\lambda\in\cP(\ell)} d_\lambda(n).
$$
In view of the celebrated estimate of
Hardy and Ramanujan~\cite{HardyRaman2}
$$
|\cP(\ell)|\sim(4\ell\sqrt{3})^{-1}\exp\big(\pi\sqrt{2\ell/3}\,\big)
\qquad(\ell\to\infty),
$$
to prove the proposition it suffices to show that the individual bound
\begin{equation}
\label{eq:flam(n)}
d_\lambda(n)\le\exp(\ell\log\ell+O(\ell\log_2\ell\log_3\ell))
\end{equation}
holds for every $\lambda\in\cP(\ell)$.

To this end, let $\lambda=(\lambda_1,\ldots,\lambda_r)$
be a fixed element of $\cP(\ell)$. For any natural number $k$,
let $m_k$ be the multiplicity with which $k$ occurs
in the partition $\lambda$, i.e.,
$$
m_k\defeq\big|\{j:\lambda_j=k\}\big|\qquad(k\in\NN).
$$
Note that $\ell=\sum_k km_k$.
Setting $m\defeq\sum_k m_k$, a simple combinatorial argument
shows that
\begin{equation}
\label{eq:wonderwoman}
d_\lambda(n)\le\frac{\ell!}{\prod_k(k!)^{m_k}}\cdot\frac{m!}{\prod_k m_k!}
\end{equation}
(roughly speaking, the second factor is the cardinality of the set
$\Phi_\sigma^{-1}(\{\lambda\})$ of preimages of $\lambda$ in $\cU(n)$,
whereas for any such preimage $w$ the first factor bounds the cardinality
of the set $\Phi_\Omega^{-1}(\{w\})$ of preimages of $w$ in $\cT(n)$).
We remark that \eqref{eq:wonderwoman}
holds with equality whenever $n$ is squarefree. Since
$\ell!\le\ell^\ell$, to establish \eqref{eq:flam(n)} it is enough to show that
\begin{equation}
\label{eq:robotics}
\log\bigg(\frac{m!}{\prod_k(k!)^{m_k}\prod_k m_k!}\bigg)\ll\ell\log_2\ell\log_3\ell.
\end{equation}
Since $m\le\ell$ and $\log j!=j\log j+O(j)$ for all positive
integers $j$, the left side of \eqref{eq:robotics} is
\begin{align*}
&\le m\log\ell-\sum_k m_k(k\log k+O(k))-\sum_k(m_k\log m_k+O(m_k))\\
&=\sum_{k\,:\,m_k\ne 0} m_k\log\Big(\frac{\ell}{k^k m_k}\Big)+O(\ell)
=S_1+S_2+O(\ell),\quad\text{(say)}
\end{align*}
where
$$
S_1\defeq\sum_{\substack{k\,:\,m_k\ne 0\\m_k>\ell/g(\ell)}}
m_k\log\Big(\frac{\ell}{k^k m_k}\Big)
\mand
S_2\defeq\sum_{\substack{k\,:\,m_k\ne 0\\m_k\le\ell/g(\ell)}}
m_k\log\Big(\frac{\ell}{k^k m_k}\Big)
$$
and
$$
g(\ell)\defeq\frac{(\log\ell)^2}{(\log_2\ell)^2\log_3\ell}.
$$
For each $k$ in the sum $S_1$, we have
$m_k>\ell/g(\ell)$ and $km_k\le\sum_{j\le\ell} jm_j=\ell$;
therefore,
\begin{align*}
S_1\le\sum_{k}
\frac{\ell}{k}\log\Big(\frac{g(\ell)}{k^k}\Big)
&\le\ell\log g(\ell)\sum_{k\,:\,k^k\le g(\ell)}\frac{1}{k}\\
&\ll\ell\log g(\ell)\log_2g(\ell)\ll\ell\log_2\ell\log_3\ell.
\end{align*}
For each $k$ in the sum $S_2$, we have $1\le m_k\le\ell/g(\ell)$; thus,
\begin{align*}
S_2\le\frac{\ell}{g(\ell)}\sum_k
\log\Big(\frac{\ell}{k^k}\Big)
&\le\frac{\ell\log\ell}{g(\ell)}\sum_{k\,:\,k^k\le\ell}1
\ll\frac{\ell(\log\ell)^2}{g(\ell)\log_2\ell}\ll\ell\log_2\ell\log_3\ell.
\end{align*}
Combining the above bounds on $S_1$ and $S_2$, we derive \eqref{eq:robotics},
and in turn \eqref{eq:flam(n)}, finishing the proof.
\end{proof}

The following corollary is crucial in the next section.

\begin{corollary}
\label{cor:coffeeshop}
For any constant $C>0$ we have
\begin{equation}
\label{eq:okko}
\bigg|\sum_{n\le x}C^{\Omega(n)}f(n)\ind{\NN_\kappa}(n)\bigg|\le x^{1+o(1)}
\qquad(x\to\infty),
\end{equation}
where the function implied by $o(1)$ depends only on $C$ and $\kappa$.
\end{corollary}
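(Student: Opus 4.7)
The plan is to split the sum according to $\ell\defeq\Omega(n)$, apply Proposition~\ref{prop:powerade} and Lemma~\ref{lem:Ryan} term by term, and exploit an \emph{a priori} bound on how large $\ell$ can be. The crucial preliminary observation is that $p_1 p_2\cdots p_{\omega(n)}$ divides $n$, so by Chebyshev/PNT one has $\omega(n)\le(1+o(1))\log x/\log_2 x$ uniformly for $n\le x$; combined with the elementary inequality $\Omega(n)\le(\kappa-1)\omega(n)$ available when $n\in\NN_\kappa$, this restricts the sum to the range $\ell\le L\defeq(\kappa-1)(1+o(1))\log x/\log_2 x$.

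Next I would write
$$
\sum_{n\le x}C^{\Omega(n)}f(n)\ind{\NN_\kappa}(n)=\sum_{\ell=0}^{L}C^\ell T_\ell,
\qquad T_\ell\defeq\sum_{\substack{n\le x,\,n\in\NN_\kappa\\ \Omega(n)=\ell}}f(n),
$$
and bound each $T_\ell$ by $N_{\kappa,\ell}(x)\cdot\max\{f(n):\Omega(n)=\ell\}$. The key computational step is to combine the $\ell^\ell$ coming from Proposition~\ref{prop:powerade} with the $(\ell-1)!$ coming from Lemma~\ref{lem:Ryan}: Stirling's lower bound $(\ell-1)!\ge((\ell-1)/e)^{\ell-1}$ yields
$$
\frac{B^{\ell-1}}{(\ell-1)!}\cdot\ell^\ell\ll\ell\,(eB)^{\ell-1}\qquad\text{with}\qquad B\defeq(\kappa-1)\log_2 x+O(1),
$$
so the potentially catastrophic factor $\ell^\ell$ is absorbed.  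After including the $\exp(O(\ell\log_2\ell\log_3\ell))$ error, the weight $C^\ell$, and using $\log B=\log_3 x+O(1)$, this produces a bound of the shape
$$
C^\ell T_\ell\ll x\exp\bigl(\ell\log_3 x+O(\ell\log_2\ell\log_3\ell)+O(\ell)\bigr),
$$
where the implied constants depend only on $C$ and $\kappa$.

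It remains to check that every term in the exponent is $o(\log x)$ for $\ell\le L$. Since $\log_2 L\le\log_3 x+O(1)$ and $\log_3 L\le\log_4 x+O(1)$, one has $\ell\log_3 x\le L\log_3 x\ll(\log x)\log_3 x/\log_2 x$ and $\ell\log_2\ell\log_3\ell\ll(\log x)\log_3 x\log_4 x/\log_2 x$, and both ratios tend to zero. Hence $C^\ell T_\ell\le x^{1+o(1)}$ uniformly in $\ell$, and summing over the $O(\log x)$ admissible values of $\ell$ preserves this bound, yielding~\eqref{eq:okko}.

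The main obstacle, in my view, is spotting that the apparently lethal maximum $f(n)\le\exp(\ell\log\ell+\cdots)$ supplied by Proposition~\ref{prop:powerade} is precisely compensated by the $(\ell-1)!$ sitting inside Lemma~\ref{lem:Ryan}, leaving only the much milder residual $\ell\log_3 x$. This residual is harmless only once $\ell$ is pinned to the sharp range $\ell\ll\log x/\log_2 x$, so the seemingly innocuous remark that $\omega(n)\le(1+o(1))\log n/\log_2 n$ (rather than the trivial $\omega(n)\le\log n/\log 2$) is what actually makes the whole argument close.
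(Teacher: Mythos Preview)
Your proposal is correct and follows essentially the same route as the paper: split by $\ell=\Omega(n)$, cap $\ell$ at $O_\kappa(\log x/\log_2 x)$ via the bound on $\omega(n)$ for $n\le x$ together with $\Omega(n)\le(\kappa-1)\omega(n)$ on $\NN_\kappa$, and then let the $\exp(\ell\log\ell)$ from Proposition~\ref{prop:powerade} cancel against the $(\ell-1)!$ in Lemma~\ref{lem:Ryan} via Stirling, leaving only terms of size $\exp(o(\log x))$. The paper's write-up differs only cosmetically (it uses the cruder $\Omega(n)\le\kappa\,\omega(n)$ and $\omega(n)\le 2\log x/\log_2 x$, and phrases the cancellation as $(\ell-1)!=x^{o(1)}\exp(\ell\log\ell)$ rather than via your explicit Stirling inequality).
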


\begin{proof}
Let $Q$ denote the quantity on the left side of \eqref{eq:okko}.

For any $n\in\NN_\kappa$ we have $\Omega(n)\le\kappa\,\omega(n)$.
Also, $\omega(n)\le 2(\log x)/\log_2x$ for all $n\le x$
once $x$ is sufficiently large.  Hence, defining
$B_\kappa(x)\defeq 2\kappa(\log x)/\log_2x$ it follows from
Proposition~\ref{prop:powerade} that
$$
Q\le\sum_{\ell\le B_\kappa(x)}
C^\ell\exp(\ell\log\ell+O(\ell\log_2\ell\log_3\ell))
\cdot N_{\kappa,\ell}(x),
$$
where $N_{\kappa,\ell}(x)$ is the counting function given by \eqref{eq:Nklx-defn}.
By Lemma~\ref{lem:Ryan} we have
\begin{align*}
Q&\le\sum_{\ell\le B_\kappa(x)}
C^\ell\exp(\ell\log\ell+O(\ell\log_2\ell\log_3\ell))
\cdot \frac{C_1x}{\log x}\frac{((\kappa-1)\log_2x+(\kappa-1)C_2)^{\ell-1}}{(\ell-1)!}\\
&\le x^{1+o(1)}\sum_{\ell\le B_\kappa(x)}
\exp(\ell\log\ell)\cdot
\frac{((\kappa-1)\log_2x+(\kappa-1)C_2)^{\ell-1}}{(\ell-1)!}\qquad(x\to\infty).
\end{align*}
Using the estimates
$$
(\ell-1)!=\exp(\ell\log\ell+O(\ell))=x^{o(1)}\exp(\ell\log\ell)
$$
and
$$
((\kappa-1)\log_2x+(\kappa-1)C_2)^{\ell-1}=\exp(O(\ell\log_3x))=x^{o(1)},
$$
which hold uniformly for all $\ell\le B_\kappa(x)$,
the result follows.
\end{proof}

\section{Reciprocal of a Dirichlet series}
\label{sec:reciprocal}

\begin{theorem}
\label{thm:lockdown}
Suppose that $F$ is bounded on $\NN$, and $F(1)\ne 0$.
Then the Dirichlet series $\sum_{n\in\NN}\widetilde F(n)\ind{\NN_\kappa}(n)n^{-s}$
converges absolutely in $\sH$.
\end{theorem}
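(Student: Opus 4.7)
The plan is to derive a pointwise bound of the form $|\widetilde F(n)|\le C^{\Omega(n)}f(n)$ for an appropriate constant $C$, then apply Corollary~\ref{cor:coffeeshop}, and finally convert the resulting partial-sum estimate into absolute convergence via partial summation.

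First I would normalize: replacing $F$ by $F/F(1)$ rescales $\widetilde F$ by a nonzero constant, so there is no loss in assuming $F(1)=1$. Write $F=I-G$ with $G(1)=0$ and $|G(n)|=|F(n)|\le M$ for $n\ge 2$, where $M\defeq\sup_{n}|F(n)|$, and set $C\defeq\max(M,1)$. Next I would expand the Dirichlet inverse as a Neumann-type series
\[
\widetilde F=(I-G)^{-1}=\sum_{k\ge 0}G^{*k},
\]
where $G^{*k}$ denotes the $k$-fold Dirichlet convolution of $G$ with itself. Because $G(1)=0$, only ordered factorisations $n=n_1\cdots n_k$ with every $n_j\ge 2$ contribute to $G^{*k}(n)$; in particular $G^{*k}(n)=0$ whenever $k>\Omega(n)$, so the series collapses to a finite sum pointwise. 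For $1\le k\le\Omega(n)$ the trivial estimate
\[
|G^{*k}(n)|\le M^k f_k(n)\le C^{\Omega(n)}f_k(n)
\]
holds, where $f_k(n)$ is the factorisation count of Section~\ref{sec:FactNumer}. Summing over $k$ and invoking \eqref{eq:dinftysum} yields the desired pointwise bound
\[
|\widetilde F(n)|\le C^{\Omega(n)}f(n)\qquad(n\in\NN).
\]

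With this in hand, Corollary~\ref{cor:coffeeshop} applied with the constant $C$ immediately gives
\[
\sum_{n\le x}|\widetilde F(n)|\,\ind{\NN_\kappa}(n)\le\sum_{n\le x}C^{\Omega(n)}f(n)\,\ind{\NN_\kappa}(n)\le x^{1+o(1)}\qquad(x\to\infty).
\]
A routine partial-summation argument (integrating $x^{-\sigma-1}$ against the above partial sums) then shows that $\sum_{n}\widetilde F(n)\,\ind{\NN_\kappa}(n)\,n^{-s}$ converges absolutely for every $s$ with $\sigma>1$, which is the claim of the theorem.

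The only step that requires any thought is the pointwise bound $|\widetilde F(n)|\le C^{\Omega(n)}f(n)$; but this essentially falls out upon comparing the Neumann expansion of $\widetilde F$ against the definition $f(n)=I(n)+\sum_{k\ge 1}f_k(n)$. All of the genuine combinatorial and analytic difficulty has already been packaged into Proposition~\ref{prop:powerade} and Corollary~\ref{cor:coffeeshop}, so once the pointwise bound is in place the proof is essentially a one-line deduction.
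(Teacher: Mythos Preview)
Your proof is correct and follows essentially the same route as the paper: establish the pointwise bound $|\widetilde F(n)|\le C^{\Omega(n)}f(n)$ via a Neumann-type expansion of $\widetilde F$ in terms of ordered factorisations, then invoke Corollary~\ref{cor:coffeeshop} and partial summation. The only cosmetic difference is that the paper justifies the identity $\widetilde F(n)=I(n)+\sum_{k\ge 1}(-1)^k f_k(F;n)$ by passing through absolute convergence of the Dirichlet series in a half-plane $\{\sigma>B+\beta\}$ (using Kalm\'ar's bound \eqref{eq:f(n)bound}), whereas you justify the equivalent Neumann series $\widetilde F=\sum_{k\ge 0}G^{*k}$ directly by noting that $G(1)=0$ forces the sum to be finite at each $n$; your route is arguably the cleaner of the two, but the substance is identical.
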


\begin{proof}
Without loss of generality,
we can assume that $F(1)=\widetilde F(1)=1$.
Let $C\ge 1$ be a number such that
\begin{equation}
\label{eq:est1}
|F(n)|\le C\qquad(n\in\NN).
\end{equation}

For every positive integer $k$, let $\cT_k(n)$ be the
set of ordered $k$-tuples $(n_1,\ldots,n_k)$ such that every $n_j\ge 2$ and
$n_1\cdots n_k=n$.  Then $|\cT_k(n)|=f_k(n)$ in the notation of
\S\ref{sec:FactNumer}.  We denote
\begin{equation}
\label{eq:fkFn}
f_k(F;n)\defeq\sum_{(n_1,\ldots,n_k)\in\cT_k(n)}F(n_1)\cdots F(n_k).
\end{equation}
Using \eqref{eq:est1} we derive that
\begin{equation}
\label{eq:est2}
|f_k(F;n)|\le C^kf_k(n).
\end{equation}
Since $f_k(F;n)=0$ for all $k>\Omega(n)$, and the inequality
$\Omega(n)\le(\log n)/\log 2$ holds for all $n$, it follows that
$$
|f_k(F;n)|\le n^Bf_k(n)\qquad\text{with}\quad
B\defeq \max\{0,(\log C)/\log 2\}.
$$
Summing over $k$ and using \eqref{eq:f(n)bound}, we see that
\begin{equation}
\label{eq:est3}
\sum_{k\ge 1}|f_k(F;n)|\ll n^{B+\beta}\qquad(n\in\NN).
\end{equation}

Next, put
$$
D(s)\defeq\sum_{n=1}^\infty F(n)n^{-s}=1+Z(F;s)
\qquad\text{with}\quad
Z(F;s)\defeq\sum_{n\ge 2}F(n)n^{-s}.
$$
For every positive integer $k$ we have
$$
Z(F;s)^k=\sum_{n\ge 2} f_k(F;n)n^{-s}.
$$
In view of \eqref{eq:est3} the identity
\begin{equation}
\label{eq:formal}
\frac{1}{1+Z(F;s)}
=1+\sum_{k\ge 1}(-1)^k Z(F;s)^k
=1+\sum_{n\ge 2}\sum_{k\ge 1}(-1)^k f_k(F;n)n^{-s}
\end{equation}
holds throughout the half-plane $\{\sigma>B+\beta\}$
since all sums converge absolutely in that region.
Noting that the left side
of \eqref{eq:formal} is $D(s)^{-1}=\sum_{n\in\NN}\widetilde F(n)n^{-s}$, we conclude that
\begin{equation}
\label{eq:tildeFreln}
\widetilde F(n)=I(n)+\sum_{k\ge 1}(-1)^k f_k(F;n)\qquad(n\in\NN).
\end{equation}

To prove the theorem, we need to show that the Dirichlet series
$$
\sum_{n\in\NN}\widetilde F(n)\ind{\NN_\kappa}(n)n^{-s}
=1+\sum_{n\ge 2}\sum_{k\ge 1}(-1)^k f_k(F;n)\ind{\NN_\kappa}(n)n^{-s}
$$
converges absolutely in $\sH$.  For any natural number $n$ 
it is clear that $f_k(F;n)=0$ whenever $k>\Omega(n)$, hence using
\eqref{eq:est2} we see that
\begin{align*}
\bigg|\sum_{k\ge 1}(-1)^k f_k(F;n)\ind{\NN_\kappa}(n)\bigg|
\le I(n)+\sum_{k\le \Omega(n)}C^kf_k(n)\ind{\NN_\kappa}(n)
\le C^{\Omega(n)}f(n)\ind{\NN_\kappa}(n).
\end{align*}
Consequently, it suffices to show that the sum
\begin{equation}
\label{eq:somesum}
\sum_{n\in\NN}C^{\Omega(n)}f(n)\ind{\NN_\kappa}(n)n^{-\sigma}
\end{equation}
converges for $\sigma>1$.  However, since the summatory function
$$
S(x)\defeq\sum_{n\le x}C^{\Omega(n)}f(n)\ind{\NN_\kappa}(n)
$$
satisfies the bound $S(x)\ll x^{1+\eps}$ by Corollary~\ref{cor:coffeeshop},
the convergence of \eqref{eq:somesum} for $\sigma>1$ follows
by partial summation.
\end{proof}

\begin{proof}[Proof of Theorem~\ref{thm:main}]
Since the Dirichlet inverse $\widetilde F$ has its support in 
$\NN_\kappa$ for some $\kappa\ge 2$,
we have $\widetilde F(n)=\widetilde F(n)\ind{\NN_\kappa}(n)$
for all $n$.  By Theorem~\ref{thm:lockdown},
$$
\sum_{n\in\NN}\widetilde F(n)\ind{\NN_\kappa}(n)n^{-s}
=\sum_{n\in\NN}\widetilde F(n)n^{-s}
=D(s)^{-1}
$$
converges absolutely in $\sH$, and the result follows.
\end{proof}

\section{The family $\sD$}
\label{sec:familyD}

For any fixed $z\in\CC$, and let $F_z$ be the arithmetical function defined by
$$
F_z(n)\defeq\begin{cases}
1&\quad\hbox{if $n=1$};\\
-z&\quad\hbox{otherwise}.
\end{cases}
$$
Then
$$
\sum_{n\in\NN}F_z(n)n^{-s}=1-z(\zeta(s)-1)\qquad(s\in\sH).
$$
Taking $F\defeq F_z$ in \eqref{eq:est3} and \eqref{eq:tildeFreln}
we have the bound $\widetilde F_z(n)\ll n^{B+\beta}$, where
$B\defeq \max\{0,(\log|z|)/\log 2\}$ and $\beta=1.728647\cdots$ as before.
This implies that the formal identity
\begin{equation}
\label{eq:AGT}
\sum_{n\in\NN}\widetilde F_z(n)n^{-s}=\frac{1}{1-z(\zeta(s)-1)}
\end{equation}
holds rigorously when $\sigma>B+\beta+1$.  Moreover, it is clear that
the Dirichlet series can be analytically
continued to the region $\{\sigma>\beta_z\}$, where
$\beta_z$ is the unique positive root of $\zeta(\beta_z)=1+|z|^{-1}$
if $z\ne 0$, and $\beta_0\defeq-\infty$.  It is worth mentioning that
for any fixed $z\ne 0$ or $-1$, the function on the right side of \eqref{eq:AGT}
has infinitely many poles in $\sH$ since the equation $\zeta(s)=1+z^{-1}$
has infinitely many solutions in any strip $\{1<\sigma<1+\eps\}$;
see, e.g., Titchmarsh~\cite[Theorem~11.6\,(C)]{Titch}.

Next, we introduce two Dirichlet series given by
$$
D_z^\dagger(s)\defeq\sum_{n\in\NN}\widetilde F_z(n)\mu(n)^2n^{-s}
$$
and
$$
D_z(s)\defeq D_z^\dagger(s)^{-1}=\sum_{n\in\NN}G_z(n)n^{-s},
$$
where $G_z$ is the Dirichlet inverse of $\widetilde F_z\cdot\mu^2$.
According to Theorem~\ref{thm:lockdown}, $D_z^\dagger(s)$
converges absolutely in $\sH$, hence it is analytic in that region.
This implies that $D_z(s)$ has a meromorphic
extension to $\sH$, and $D_z(s)\ne 0$ in $\sH$;
thus, we have verified property $(ii)$ of \S\ref{sec:intro}.

From the above definitions, one sees that $F_{-1}(n)=\ind{\NN}(n)$,
thus $D_{-1}(s)=\zeta(s)$ (establishing property $(i)$ of \S\ref{sec:intro}), and
$F_0(n)=I(n)$, so that $D_0(s)=\ind{\CC}(s)$.

To establish property $(iii)$ of \S\ref{sec:intro},
observe that the M\"obius relations
$$
\sum_{ab=n}\widetilde F_z(a)F_z(b)=I(n)
\mand\sum_{ab=n}\widetilde F_z(a)\mu(a)^2G_z(b)=I(n)
$$
immediately imply that
$
G_z(p)=G_z(q)=G_z(pq)=-z
$
for any two different primes $p$ and $q$.\footnote{A more
elaborate argument shows that $F_z(n)=G_z(n)=-z$
for all squarefree numbers $n$.}
If $D_z(s)$ has an Euler product, then
$G_z$ is multiplicative, and therefore
$$
(-z)^2=G_z(p)G_z(q)=G_z(pq)=-z,
$$
which is only possible for $z=0$ or $-1$.

\begin{lemma}
\label{lem:chirpingbird}
Let $z$ be a complex number, $n$ a natural number,
and $p$ a prime number not dividing $n$.
For any integer $\alpha\ge 1$ we have
$$
\widetilde F_z(p^\alpha n)
=(z+1)^{\alpha-1}
\sum_{\ell\ge 1}z^\ell \big(z+\ell\alpha^{-1}(z+1)\big)
\binom{\alpha+\ell-1}{\ell}
f_\ell(n).
$$
\end{lemma}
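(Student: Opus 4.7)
The plan is to extract $\widetilde F_z$ from the general formula \eqref{eq:tildeFreln}, count $f_k(p^\alpha n)$ combinatorially, and then evaluate the resulting double sum via a short generating-function manipulation.

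First, I would specialize \eqref{eq:tildeFreln} to $F=F_z$. Because $F_z(m)=-z$ whenever $m\ge 2$, and each factor appearing in the definition \eqref{eq:fkFn} of $f_k(F_z;\cdot)$ is required to be $\ge 2$, we have $f_k(F_z;N)=(-z)^k f_k(N)$; hence \eqref{eq:tildeFreln} collapses to
$$\widetilde F_z(N) = \sum_{k\ge 1} z^k f_k(N) \qquad (N\ge 2).$$
Applied to $N=p^\alpha n$ with $\alpha\ge 1$, this reduces the problem to computing $f_k(p^\alpha n)$ explicitly.

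Next, I would carry out a direct combinatorial count of $f_k(p^\alpha n)$. Since $\gcd(p,n)=1$, every ordered factorization of $p^\alpha n$ into $k$ parts $\ge 2$ decomposes uniquely by writing each factor as $p^{a_i}d_i$ with $a_i\ge 0$ and $d_i\mid n$. Conditioning on the number $\ell$ of indices at which $d_i\ge 2$, one chooses the positions in $\binom{k}{\ell}$ ways, selects an ordered factorization of $n$ into $\ell$ parts $\ge 2$ (contributing $f_\ell(n)$), and then distributes $\alpha$ copies of $p$ among the $k$ slots subject to $a_i\ge 0$ on the $\ell$ ``non-trivial'' slots and $a_i\ge 1$ on the remaining $k-\ell$ slots (to ensure the corresponding factors are $\ge 2$). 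A stars-and-bars count, after the shift $a_i\mapsto a_i-1$ on the trivial slots, gives $\binom{\alpha+\ell-1}{k-1}$ for the last step, and therefore
$$f_k(p^\alpha n) = \sum_{\ell\ge 0} \binom{k}{\ell}\binom{\alpha+\ell-1}{k-1}\,f_\ell(n).$$

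Substituting this identity into the formula for $\widetilde F_z(p^\alpha n)$ and interchanging the order of summation reduces the lemma to the algebraic identity
$$\sum_{k\ge 1} z^k \binom{k}{\ell}\binom{\alpha+\ell-1}{k-1} = (z+1)^{\alpha-1} z^\ell \binom{\alpha+\ell-1}{\ell}\bigl(z+\ell\alpha^{-1}(z+1)\bigr)$$
for each $\ell\ge 1$. This closed-form evaluation is the main obstacle. My plan is to substitute $k=j+1$, use the snake identity $\binom{j+1}{\ell}=[y^\ell](1+y)^{j+1}$, and sum the resulting geometric-type series to rewrite the left side as
$$z\,[y^\ell]\,(1+y)\bigl(1+z(1+y)\bigr)^{\alpha+\ell-1} = z\,[y^\ell]\,(1+y)\bigl((1+z)+zy\bigr)^{\alpha+\ell-1}.$$
Expanding the inner binomial and extracting the coefficients of $y^\ell$ and $y^{\ell-1}$ yields two terms involving $\binom{\alpha+\ell-1}{\ell}$ and $\binom{\alpha+\ell-1}{\ell-1}$; these combine to the desired expression after applying the elementary identity $\binom{\alpha+\ell-1}{\ell-1}=(\ell/\alpha)\binom{\alpha+\ell-1}{\ell}$.
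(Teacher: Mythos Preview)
Your proof is correct and follows the paper's argument step for step: the specialization of \eqref{eq:tildeFreln} to $F=F_z$, the stars-and-bars count of $f_k(p^\alpha n)$, and the interchange of summation are exactly as in the paper. The only divergence is in how you close out the inner binomial sum: the paper shifts $k\mapsto k+\ell$ and invokes the identity
\[
\binom{k+\ell}{\ell}\binom{\alpha+\ell-1}{k+\ell-1}
=\binom{\alpha+\ell-1}{\ell}\Bigl(\binom{\alpha-1}{k-1}+\tfrac{\ell}{\alpha}\binom{\alpha}{k}\Bigr),
\]
then applies the binomial theorem twice, whereas you pass through the coefficient extraction $[y^\ell](1+y)\bigl((1+z)+zy\bigr)^{\alpha+\ell-1}$. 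Both routes are short and standard; yours has the mild advantage of not requiring one to recognize the specific product-of-binomials identity above, while the paper's route avoids the generating-function formalism. (A small quibble: the sum you call ``geometric-type'' is really just the binomial theorem applied to $(1+z(1+y))^{\alpha+\ell-1}$.)
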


\begin{proof}
Using \eqref{eq:fkFn}, \eqref{eq:tildeFreln} and the definition
of $F_z$, it follows that
\begin{equation}
\label{eq:dancearts}
\widetilde F_z(p^\alpha n)=\sum_{k\ge 1}z^k f_k(p^\alpha n).
\end{equation}
The quantity $f_k(p^\alpha n)$ is the number of ordered $k$-tuples
$(m_1,\ldots,m_k)$ such that $m_j=p^{\alpha_j}n_j$ for each $j$,
with $\alpha_j\ge 1$ or $n_j\ge 2$,
$\alpha_1+\cdots+\alpha_k=\alpha$, and $n_1\cdots n_k=n$.
To construct such a $k$-tuple, first choose an integer $\ell$
in the range $1\le\ell\le k$
and an ordered $\ell$-tuple $(\hat n_1,\ldots,\hat n_\ell)$ with
each $\hat n_j\ge 2$ and $\hat n_1\cdots\hat n_\ell=n$; for any choice of $\ell$
there are precisely $f_\ell(n)$ such $\ell$-tuples.  Next,
maintaining the ordering of the integers $\hat n_j$ in
$(\hat n_1,\ldots,\hat n_\ell)$,
we construct $(n_1,\ldots,n_k)$ by
inserting $k-\ell$ extra entries, each equal to one (thus,
every $n_i$ in the resulting \text{$k$-tuple}
is one of the numbers $\hat n_j$,
or else $n_i=1$); there are $\binom{k}{k-\ell}=\binom{k}{\ell}$ ways to
insert these extra ones to form $(n_1,\ldots,n_k)$.
To guarantee that every $m_j=p^{\alpha_j}n_j\ge 2$ 
in the final $k$-tuple $(m_1,\ldots,m_k)$, so it is
counted in the computation of $f_k(p^\alpha n)$, we have to replace each
entry $n_i=1$ in $(n_1,\ldots,n_k)$ with a copy
of the prime $p$.  As
there are only $\alpha$ copies of $p$ available,
it must be the case that $\alpha\ge k-\ell$ else this choice
of $\ell$ is unacceptable. The
remaining $\alpha-k+\ell$ copies of the prime $p$ can be distributed arbitrarily.
As the number of ways that one can distribute $\alpha-k+\ell$ objects
into $k$ boxes is $\binom{\alpha+\ell-1}{k-1}$, putting everything
together we have
$$
f_k(p^\alpha n)=\sum_{\ell=\max\{1,k-\alpha\}}^k\binom{k}{\ell}
\binom{\alpha+\ell-1}{k-1}f_\ell(n).
$$
Combining this result with \eqref{eq:dancearts}, we derive that
$$
\widetilde F_z(p^\alpha n)
=\sum_{\ell\ge 1}f_\ell(n)
\sum_{k=\ell}^{\alpha+\ell}z^k
\binom{k}{\ell}
\binom{\alpha+\ell-1}{k-1}
$$
Making the change of variables $k\mapsto k+\ell$ in the inner
summation, it follows that
$$
\widetilde F_z(p^\alpha n)
=\sum_{\ell\ge 1}z^\ell f_\ell(n)B(z,\alpha,\ell),
$$
where
$$
B(z,\alpha,\ell)\defeq\sum_{k=0}^\alpha z^k\binom{k+\ell}{\ell}
\binom{\alpha+\ell-1}{k+\ell-1}.
$$
In view of the combinatorial identity
$$
\binom{k+\ell}{\ell}
\binom{\alpha+\ell-1}{k+\ell-1}
=\binom{\alpha+\ell-1}{\ell}
\bigg(\binom{\alpha-1}{k-1}
+\frac{\ell}{\alpha}\binom{\alpha}{k}\bigg)
$$
(where $\binom{\alpha-1}{k-1}=0$ when $k=0$),
it follows that
\begin{align*}
B(z,\alpha,\ell)&=\binom{\alpha+\ell-1}{\ell}
\sum_{k=0}^\alpha z^k
\bigg(\binom{\alpha-1}{k-1}
+\frac{\ell}{\alpha}\binom{\alpha}{k}\bigg)\\
&=\binom{\alpha+\ell-1}{\ell}
\big(z(z+1)^{\alpha-1}+\ell\alpha^{-1}(z+1)^\alpha\big),
\end{align*}
and we obtain the stated result.
\end{proof}

\begin{proof}[Proof of Theorem~\ref{thm:main2}]
Since $F$ is completely multiplicative, from \eqref{eq:fkFn}
it follows that
$$
f_k(F;n)=F(n)f_k(n)\qquad(k,n\in\NN).
$$
With two applications of \eqref{eq:tildeFreln} we deduce that
\begin{equation}
\label{eq:lunchtime}
\widetilde F(n)=I(n)+F(n)\sum_{k\ge 1}(-1)^k f_k(n)
=F(n)\widetilde F_{-1}(n)\qquad(n\in\NN).
\end{equation}
Applying Lemma~\ref{lem:chirpingbird} with $z=-1$, we see that
$\widetilde F_{-1}(p^\alpha n)=0$ for any $n\in\NN$, any prime $p$ not
dividing $n$, and all $\alpha\ge 2$.  This implies that $\widetilde F_{-1}$
is supported on the set of squarefree numbers,\footnote{As we have already seen,
$\widetilde F_{-1}$ is the M\"obius function $\mu$.}
and \eqref{eq:lunchtime} shows that the same is true of $F$.
\end{proof}

\section{Remarks}
\label{sec:Sarnak}

Theorems~\ref{thm:main} and \ref{thm:lockdown} can be extended
to cover all functions satisfying the polynomial growth 
condition $F(n)\ll n^A$ provided that one is willing to replace $\sH$
with the half-plane $\{s\in\CC:\sigma>A+1\}$ in those theorems.
It would be interesting to see whether the ideas of this paper can be
developed to produce zero-free regions for $\zeta(s)$ and other Dirichlet
series inside the critical strip.

Sarnak~\cite{Sarnak} has recently considered a general pseudo-randomness principle
related to a famous conjecture of Chowla\cite{Chowla}.  Roughly speaking, the
principle asserts that the M\"obius function $\mu(n)$ does not correlate with
any function $\xi(n)$ of low complexity.  In other words,
\begin{equation}
\label{eq:Sarnak}
\sum_{n\le x}\mu(n)\xi(n)=o\bigg(\sum_{n\le x}|\xi(n)|\bigg)\qquad(x\to\infty).
\end{equation}
Combining Kalm\'ar's result~\eqref{eq:Kalmar} with Corollary~\ref{cor:coffeeshop},
we see that \eqref{eq:Sarnak} is verified for the function $\xi(n)\defeq f(n)$.  However,
this is not due to the randomness of $\mu(n)$ but instead to the fact $f(n)$
takes smaller values on squarefree numbers than it does on natural numbers in general.
It would be interesting see whether \eqref{eq:Sarnak} holds for 
$\xi(n)\defeq f(n)\mu(n)^2$.

Let $f_{\it even}(n)$ [resp.\ $f_{\it odd}(n)$] denote
 the number of representations of $n$
as a product of an \emph{even} [resp.\ \emph{odd}] number of integers
exceeding one, where two representations are considered
equal only if they contain the same factors in the same order.
In other words,
$$
f_{\it even}(n)\defeq I(n)+\sum_{\substack{k\ge 1\\k\text{~even}}}f_k(n)
\mand
f_{\it odd}(n)\defeq \sum_{\substack{k\ge 1\\k\text{~odd}}}f_k(n).
$$
Clearly,
$f(n)=f_{\it even}(n)+f_{\it odd}(n)$,
but it is less obvious that
\begin{equation}
\label{eq:mu-relation}
\mu(n)=f_{\it even}(n)-f_{\it odd}(n)\qquad(n\in\NN).
\end{equation}
Indeed, taking $F\defeq F_{-1}=\ind{\NN}$ we have $f_k(F;n)=f_k(n)$
for all $n$ by \eqref{eq:fkFn}, and then \eqref{eq:mu-relation}
follows immediately from \eqref{eq:tildeFreln}.

\section*{Acknowledgements}
The author thanks Andrew Granville and Igor Shparlinski for their insightful
comments on the original draft, which led to improvements in the
results and the exposition.

\end{document}